\documentclass[11pt,letterpaper]{amsart}

\usepackage[margin=1.25in]{geometry}                

\usepackage{mathabx}
\usepackage{amsmath,amssymb,amsthm,amscd}
\usepackage{colonequals}

\usepackage{graphicx}
\usepackage{tikz}
\usepackage{tikz-cd}
\usetikzlibrary{decorations.markings}
\usetikzlibrary{plotmarks}
\usepackage{epstopdf}
\usepackage{import} 
\usepackage{transparent}

\usepackage{multicol}

\usepackage{url}
\usepackage{color}
\usepackage[colorlinks,citecolor=blue,linkcolor=red!80!black]{hyperref}
\usepackage[nameinlink]{cleveref}
\DeclareGraphicsRule{.tif}{png}{.png}{`convert #1 `dirname #1`/`basename #1 .tif`.png}

\usepackage{comment}

\usepackage{marginnote}
\usepackage{todonotes}

\theoremstyle{plain}
\newtheorem{theorem}{Theorem}[section]
\newtheorem{lemma}[theorem]{Lemma}

\newtheorem{proposition}[theorem]{Proposition}


\crefname{claim}{Claim}{Claims}
\newtheorem*{claim*}{Claim}

\newcounter{theoremalph}

\newtheorem{thmAlph}[theoremalph]{Theorem}

\theoremstyle{definition}

\newtheorem{question}[theorem]{Question}

\newtheorem{example}[theorem]{Example}

\makeatletter
\let\c@equation\c@theorem
\makeatother
\numberwithin{equation}{section}

\newcommand{\la}{\langle}
\newcommand{\ra}{\rangle}

\newcommand{\N}{\mathbb{N}}
\newcommand{\Z}{\mathbb{Z}}

\newcommand{\R}{\mathbb{R}}

\newcommand{\free}{\mathbb{F}}
\newcommand{\out}{\mathrm{Out}(\free)}

\newcommand{\FreeF}{\mathcal{F}}
\newcommand{\relFreeS}[1]{\mathcal{FS}(\free, #1)}

\newcommand{\cv}{\mathcal{O}}

\newcommand{\relcv}[1]{\mathcal{O}(\free, #1)}
\newcommand{\relCV}[1]{\mathbb{P}\mathcal{O}(\free, #1)}



\newcommand{\core}[1]{C_w(T_{#1})}
\newcommand{\diam}{\mathrm{diam}}
\DeclareMathOperator{\BBT}{BBT} 
\DeclareMathOperator{\cii}{\text{combinatorial isoperimetric inequality}}

\title{Combinatorial isoperimetric inequality for the free factor complex}
\author[Gupta]{Radhika Gupta}
\address{School of Mathematics, Tata Institute of Fundamental Research, Mumbai}
\email{rgupta@math.tifr.res.in}
\date{}                                           

\begin{document}
\maketitle

\begin{abstract}
 We show that the free factor complex of the free group of rank $n \geq 4$ does not satisfy a $\cii$: that is, for every $N \in \N$, there is a loop $c_N$ of length 4 in the free factor complex such that the number of 2-simplices required to fill $c_N$ grows at least as a linear function of $N$. To prove the result, we construct a coarsely Lipschitz map from the `upward link' of a free factor to $\mathbb{Z}$. 
\end{abstract}

\section{Introduction}

Webb \cite{Webb} showed that arc complexes associated to almost all hyperbolic surfaces do not admit a \emph{$\mathrm{CAT}(0)$ metric with finitely many shapes}. That is, they do not admit CAT(0) metrics which have finitely many isometry types of simplices, in the induced metric. He proved this by showing that arc complexes do not satisfy a \emph{combinatorial isoperimetric inequality}. In contrast, he showed that the curve complex of a hyperbolic surface satisfies a \emph{linear} $\cii$.

Analogously, the group of outer automorphisms of a free group $\out$ acts on some hyperbolic simplicial complexes, like the free splitting complex, cyclic splitting complex and the free factor complex. Webb showed that the free splitting complex and the cyclic splitting complex also do not satisfy a $\cii$. In this article, we show that the \emph{free factor complex} follows suit with the other two $\out$-complexes. Thus still leaving open the question: is there a cocompact complex for $\out$, analogous to the curve complex, that satisfies a linear combinatorial isoperimetric inequality?

The free factor complex $\FreeF$ associated to $\free$ is the simplicial complex whose vertices are conjugacy classes of proper free factors of $\free$ and a collection $\{[A_1], \ldots, [A_k]\}$ of vertices spans a simplex if $A_1 \subset A_2 \subset \cdots \subset A_k$, where the inclusions are up to conjugation. When it is clear from context, we drop $[.]$ to denote the conjugacy class. We show: 

\begin{thmAlph}
Let $\FreeF$ be the free factor complex of free group of rank $n \geq 4$. There exists a constant $C = 2n-4$ and a family of loops $c_N$, for $N \in \N$, of combinatorial length 4 in $\FreeF^{(1)}$ such that the following holds: whenever $P$ is a triangulation of a disc and $f\colon P \to \FreeF^{(2)}$ is a simplicial map with $f|_{\partial P}$ mapping bijectively onto $c_N$, then $P$ must have at least $(N-2C)/3$ triangles. \end{thmAlph}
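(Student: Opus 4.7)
The plan is to adapt Webb's obstruction for the free and cyclic splitting complexes. The loops $c_N$ will be length-$4$ quadrilaterals
\[
c_N \colon \ v \ \longrightarrow\ w_0 \ \longrightarrow\ v' \ \longrightarrow\ w_N \ \longrightarrow\ v
\]
in $\FreeF_n^{(1)}$, where $v$ and $v'$ are two incomparable rank-$1$ free factors of $\free$, and $w_0, w_N$ are two proper free factors each conjugate-containing $v$ and $v'$, chosen mutually incomparable in $\FreeF_n$ and far apart in a coarsely Lipschitz $\mathbb{Z}$-valued invariant $\psi$ on the upward link $\mathrm{Lk}^{\uparrow}(v)$. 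The lower bound on the area of any filling will come from the star of the $v$-preimage in the filling: it yields a long edge-path in $\mathrm{Lk}(v)$ from $w_0$ to $w_N$ whose length is controlled below by $\psi$.

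To construct $\psi$, take $v = \la a \ra$ and write $\free = v \ast C$ with $C \cong \free_{n-1}$. Every proper free factor of $\free$ conjugate-containing $v$ can be conjugated to the form $v \ast D$ for a free factor $D$ of $C$, and inclusions of free factors of $\free$ correspond to inclusions of the $D$'s. For $n \geq 4$ this identifies $\mathrm{Lk}^{\uparrow}(v)$ simplicially with $\FreeF(C) = \FreeF_{n-1}$, which is connected of infinite diameter by Bestvina--Feighn; I would take $\psi$ to be distance to a fixed basepoint, which is $1$-Lipschitz and unbounded. The case $n = 3$ requires extra care since $\FreeF_2$ is $0$-dimensional, and here one needs a finer invariant built from a Whitehead-graph or subfactor-projection counting, Lipschitz on the natural relation between rank-$2$ free factors sharing common rank-$1$ sub-factors --- this is presumably the delicate content behind the abstract's phrase ``coarsely Lipschitz function''. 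I would then choose $w_0, w_N \in \mathrm{Lk}^{\uparrow}(v)$ of corank $1$ in $\free$, each conjugate-containing $v'$, mutually incomparable, with $\psi(w_N) - \psi(w_0) \geq N$.

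Given a simplicial filling $f \colon P \to \FreeF_n^{(2)}$ of $c_N$, the unique $x_0 \in \partial P$ with $f(x_0) = v$ has link in $P$ a combinatorial arc joining its two boundary neighbours, which map to $w_0$ and $w_N$. Under $f$ this becomes an edge-path $\gamma$ in $\mathrm{Lk}(v) \subset \FreeF_n$ from $w_0$ to $w_N$, whose length equals the number of triangles of $P$ containing $x_0$. Because $v$ has rank $1$ its downward link is empty, so $\gamma$ lies entirely in $\mathrm{Lk}^{\uparrow}(v)$, and the coarse Lipschitz property of $\psi$ forces $\gamma$ to have length at least $cN$ for some $c > 0$, yielding the area bound. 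The main obstacle I anticipate is ruling out a cheap filling of $c_N$: incomparability of $v, v'$ and of $w_0, w_N$ kills both $2$-triangle diagonal fillings, and corank-$1$ choice of $w_0, w_N$ kills any $4$-triangle cone with apex above $w_0, w_N$; what remains is a $4$-triangle cone with apex $u$ satisfying $v, v' \prec u \prec w_0, w_N$, which is ruled out iff $w_0$ and $w_N$ share no common proper sub-factor conjugate-containing both $v$ and $v'$. Arranging this simultaneously with $\psi(w_N) - \psi(w_0) \geq N$ is the combinatorial crux, and will likely require a subfactor-projection argument in the style of Bestvina--Feighn.
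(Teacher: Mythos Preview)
Your overall architecture matches the paper's: a length-$4$ loop through a rank-$1$ factor $v$, a coarsely Lipschitz $\psi$ on $\mathrm{Lk}^{\uparrow}(v)$, and the link-of-$x_0$ argument extracting a long edge-path in any filling. The last step is exactly right. The gap is in your construction of $\psi$.

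The claimed identification $\mathrm{Lk}^{\uparrow}(v)\cong \FreeF(C)=\FreeF_{n-1}$ is false. With $v=\langle a\rangle$, $C=\langle b,a_3,\dots,a_n\rangle$, and $w\in C$ filling, the two free factors $w_0=\langle a,b\rangle$ and $w_N=\langle a,w^Nbw^{-N}\rangle$ are distinct, non-conjugate vertices of $\mathrm{Lk}^{\uparrow}(v)$, yet each decomposes as $\langle a\rangle\ast D$ with $D$ in the \emph{same} $C$-conjugacy class $[\langle b\rangle]$. So the assignment $\langle a\rangle\ast D\mapsto [D]_C$ collapses them, and no invariant pulled back from $\FreeF(C)$ can separate them. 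More to the point, any pair $w_0,w_N$ in your loop must contain both $v$ and $v'$; this forces their ``$D$-parts'' to lie in a single conjugacy class of rank-$1$ factors of $C$, so distance in $\FreeF(C)$ is useless precisely on the pairs you need. The difficulty you flag at the end --- arranging $\psi(w_N)-\psi(w_0)\ge N$ while both $w_i$ contain $v'$ --- is not a combinatorial wrinkle to be handled afterwards; it \emph{is} the theorem, and your proposed $\psi$ cannot see it.

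The paper's replacement for your $\psi$ is an invariant $\Psi_{w,b,B}$ built not from $\FreeF(C)$ but from relative Outer space $\relCV{X}$: for $X\in\mathrm{Lk}^{\uparrow}(v)$ and a tree $T$ with vertex group $X$, one counts how many fundamental domains of the axis of $w$ are crossed by the segment from the $A$-fixed point to the axis of $b$. This is insensitive to the $C$-conjugacy class of $b$ but sensitive to \emph{which} conjugate sits alongside $a$ in $X$ --- exactly what distinguishes $w_0$ from $w_N$. Proving this is coarsely well-defined and Lipschitz is the technical heart (Proposition~\ref{main prop}), using the active-intervals lemma and an analysis of folding/unfolding in $\relCV{X}$; this is where the real work happens, not in a subfactor-projection bookkeeping argument. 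Your paragraph on ruling out cheap $2$- and $4$-triangle fillings is unnecessary: once $\psi$ exists, the link-path bound already excludes every filling uniformly.
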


The free factor complex for the free group of rank 3 is 1-dimensional, so we slightly modify the definition of the free factor complex, up to quasi-isometry, to prove a statement as above. This is done in \Cref{n=3}. 

All of the simplicial complexes mentioned above are Gromov hyperbolic spaces by results of \cite{MM:CurveComplex, HM:FreeSplittingComplex, BF:Hyperbolicity, M:CyclicS} and a Gromov hyperbolic space is characterised by having a linear \emph{coarse} isoperimetric inequality. That coarse and combinatorial isoperimetric inequalities are different can be seen in the example shown in \Cref{CombVsCoarse}, where we consider all 2-simplices to be isometric. Indeed, the space does not satisfy a combinatorial isoperimetric inequality, that is, there are loops of combinatorial length four that need arbitrarily many triangles to homotope to a point but the space has a constant coarse combinatorial inequality. 

\begin{figure}[ht]
    \centering{
    \def\svgwidth{.5\columnwidth}
\begingroup%
  \makeatletter%
  \providecommand\color[2][]{%
    \errmessage{(Inkscape) Color is used for the text in Inkscape, but the package 'color.sty' is not loaded}%
    \renewcommand\color[2][]{}%
  }%
  \providecommand\transparent[1]{%
    \errmessage{(Inkscape) Transparency is used (non-zero) for the text in Inkscape, but the package 'transparent.sty' is not loaded}%
    \renewcommand\transparent[1]{}%
  }%
  \providecommand\rotatebox[2]{#2}%
  \newcommand*\fsize{\dimexpr\f@size pt\relax}%
  \newcommand*\lineheight[1]{\fontsize{\fsize}{#1\fsize}\selectfont}%
  \ifx\svgwidth\undefined%
    \setlength{\unitlength}{191.22368882bp}%
    \ifx\svgscale\undefined%
      \relax%
    \else%
      \setlength{\unitlength}{\unitlength * \real{\svgscale}}%
    \fi%
  \else%
    \setlength{\unitlength}{\svgwidth}%
  \fi%
  \global\let\svgwidth\undefined%
  \global\let\svgscale\undefined%
  \makeatother%
  \begin{picture}(1,0.37172388)%
    \lineheight{1}%
    \setlength\tabcolsep{0pt}%
    \put(0,0){\includegraphics[width=\unitlength,page=1]{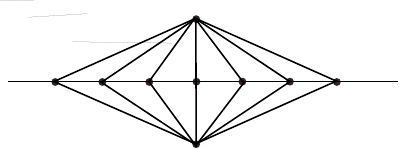}}%
    \put(0.13588112,0.25097837){\color[rgb]{0,0,0}\makebox(0,0)[lt]{\lineheight{1.25}\smash{\begin{tabular}[t]{l}$\ldots$\end{tabular}}}}%
    \put(0.76203655,0.24857893){\color[rgb]{0,0,0}\makebox(0,0)[lt]{\lineheight{1.25}\smash{\begin{tabular}[t]{l}$\ldots$\end{tabular}}}}%
    \put(0.76727934,0.05933814){\color[rgb]{0,0,0}\makebox(0,0)[lt]{\lineheight{1.25}\smash{\begin{tabular}[t]{l}$\ldots$\end{tabular}}}}%
    \put(0.13466183,0.06197464){\color[rgb]{0,0,0}\makebox(0,0)[lt]{\lineheight{1.25}\smash{\begin{tabular}[t]{l}$\ldots$\end{tabular}}}}%
  \end{picture}%
\endgroup%

    \caption{Combinatorial vs coarse isoperimetric inequality.}
    \label{CombVsCoarse}
    }
\end{figure}
 
Note that the arc complex of a hyperbolic surface and free splitting complex of a free group are contractible spaces \cite{Harer:Stability, McCullough,Hatcher:ArcComplex,Hatcher:FSContractible}. Therefore, these spaces may still admit a CAT(0) metric, though not one with finitely many shapes. We refer the reader to \cite{Webb} for motivation to study CAT(0) metrics on these spaces.  On the contrary, the curve complex and the free factor complex are homotopy equivalent to a wedge of spheres \cite{H:VCD, Ivanov, BruckGupta} so neither space can admit a CAT(0) metric.

The proof strategy for the theorem is as follows. For any $N >0$, we construct an explicit loop $c_N$ of length 4 with vertices $A_0, A, A_N, B$, where $A$ is a rank one free factor. See \Cref{loop}. We show that any path between $A_0$ and $A_N$ in the link of $A$ has length bounded from below by a linear function of $N$. We do this by defining (see \Cref{coarsely Lip}) a coarsely Lipschitz map from the link of $A$ to $\Z$. Using this, we conclude that the number of triangles needed to cap off any disc with boundary $c_N$ is at least a linear function of $N$.   

In fact, in \Cref{sec:CL function} we define a family of coarsely Lipschitz maps from the upward link, denoted $\FreeF^{\uparrow}(A)$, of a free factor $A$ of corank at least 3  to $\Z$. 
We choose a complementary free factor $B$, a cyclically reduced filling element $w$ in $B$ that is not a power, and a primitive element $b$ in $B$. Then we define $\Psi_{w,b,B} \colon \FreeF^{\uparrow}(A) \to \Z$ to roughly measure how much $b$ gets conjugated by $w$, in other words how much $b$ twists about $w$, in a basis of $\free$ that contains a basis of $X \in \FreeF^{\uparrow}(A)$ as a subbasis. 

\begin{figure}[ht]
    \centering{
    \def\svgwidth{.5\columnwidth}
\begingroup%
  \makeatletter%
  \providecommand\color[2][]{%
    \errmessage{(Inkscape) Color is used for the text in Inkscape, but the package 'color.sty' is not loaded}%
    \renewcommand\color[2][]{}%
  }%
  \providecommand\transparent[1]{%
    \errmessage{(Inkscape) Transparency is used (non-zero) for the text in Inkscape, but the package 'transparent.sty' is not loaded}%
    \renewcommand\transparent[1]{}%
  }%
  \providecommand\rotatebox[2]{#2}%
  \newcommand*\fsize{\dimexpr\f@size pt\relax}%
  \newcommand*\lineheight[1]{\fontsize{\fsize}{#1\fsize}\selectfont}%
  \ifx\svgwidth\undefined%
    \setlength{\unitlength}{275.10684526bp}%
    \ifx\svgscale\undefined%
      \relax%
    \else%
      \setlength{\unitlength}{\unitlength * \real{\svgscale}}%
    \fi%
  \else%
    \setlength{\unitlength}{\svgwidth}%
  \fi%
  \global\let\svgwidth\undefined%
  \global\let\svgscale\undefined%
  \makeatother%
  \begin{picture}(1,0.61382785)%
    \lineheight{1}%
    \setlength\tabcolsep{0pt}%
    \put(0,0){\includegraphics[width=\unitlength,page=1]{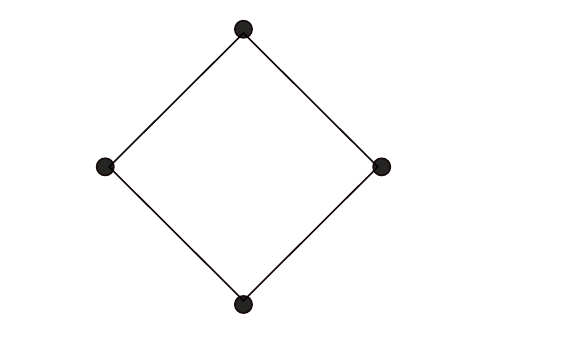}}%
    \put(0.39594297,0.00426471){\color[rgb]{0,0,0}\makebox(0,0)[lt]{\lineheight{1.25}\smash{\begin{tabular}[t]{l}$[\la a \ra]$\end{tabular}}}}%
    \put(0.69824226,0.31392618){\color[rgb]{0,0,0}\makebox(0,0)[lt]{\lineheight{1.25}\smash{\begin{tabular}[t]{l}$[\la a, w^N b w^{-N}\ra]$\end{tabular}}}}%
    \put(-0.00181038,0.31587343){\color[rgb]{0,0,0}\makebox(0,0)[lt]{\lineheight{1.25}\smash{\begin{tabular}[t]{l}$[\la a, b \ra]$\end{tabular}}}}%
    \put(0.38764874,0.59725758){\color[rgb]{0,0,0}\makebox(0,0)[lt]{\lineheight{1.25}\smash{\begin{tabular}[t]{l}$[\la b \ra]$\end{tabular}}}}%
  \end{picture}%
\endgroup%

    \caption{The loop $c_N$.}
    \label{loop}
    }
\end{figure}

The free factor complex $\FreeF$ is quasi-isometric to another complex called the \emph{complex of free factor systems}, denoted $\FreeF \FreeF$. The complex $\FreeF \FreeF$ is closely related to the simplicial closure of unreduced Outer space (see \cite{BruckGupta}). A free factor system of $\free$ is a finite collection of the form $\mathcal{A}=\{[A_1],...,[A_k]\},$ where $k >0$, each $A_i$ is a proper, non-trivial free factor of $\free$, such that there exists a free factorization $\free = A_1 \ast \cdots A_k \ast F_M$.  There is a partial ordering on the set of free factor systems given as follows: $\mathcal{A} \sqsubseteq \mathcal{A}'$ if for every $[A_i] \in \mathcal{A}$ there exists $[A'_j] \in \mathcal{A'}$ such that $A_i \subseteq A_j'$ up to conjugation. The vertices of $\FreeF \FreeF$ are free factor systems and two vertices $\mathcal{A}_1$ and $\mathcal{A}_2$ are joined by an edge if $\mathcal{A}_1 \sqsubseteq \mathcal{A}_2$ or vice versa. 

Now recall the length 4 loop $c_N$ in $\FreeF$. It can also be viewed as a loop in $\FreeF \FreeF$. However, in this complex, the vertex $\{[\la a \ra], [\la b \ra]\}$ is connected to both $A_0$ and $A_N$ and hence the loops $c_N$ can be capped off with 4 triangles. Thus it is natural to ask: 
\begin{question}
	Does the complex of free factor systems  $\FreeF \FreeF$ satisfy some kind of combinatorial isoperimetric bound?
\end{question}

\subsubsection*{Acknowledgments} The author would like to thank Richard Webb for asking her this question and useful discussions, Mladen Bestvina for sharing his ideas and Jean Pierrre Mutanguha for reading a draft. She would also like to thank the referee for suggestions that shortened the proof of the main proposition. The author was supported by  the Department of Atomic Energy, Government of India, under project no.12-R\&D-TIFR-5.01-0500, by an endowment of the Infosys Foundation and SERB research grant SRG/2023/000123. 

\section{Background}

\subsection{Combinatorial isoperimetric inequalities}\label{subsec:CII}
We recall some definitions from \cite{Webb}. 

Let $P$ and $K$ be simplicial complexes. A \emph{simplicial map} is a map $c \colon P \to K$ that sends each simplex of $P$ to a simplex of $K$ by a linear map taking vertices to vertices. Let $K^{(i)}$ denote the $i$-skeleton of $K$. A \emph{combinatorial loop} $c$ in $K$ is a sequence of vertices $(v_1, \ldots, v_k)$ in $K$ where $v_k$ is adjacent (or equal) to $v_1$ and $v_i$ is adjacent (or equal) to $v_{i+1}$ for $1 \leq i \leq k-1$. The \emph{combinatorial length} $l_C(c)$ of $c$ is equal to $k$. A combinatorial loop $c$ of combinatorial length $k$ can also be described as a simplicial map $c \colon P \to K$, where $P$ is a triangulation of $S^1$ with $k$ 1-simplices. 

Let $D^2$ be the closed unit disc with boundary $S^1$. We say that a combinatorial loop $c$ can be \emph{capped off with at most $m$ triangles} if there is a triangulation $P$ of $D^2$ into at most $m$ 2-simplices and a simplicial map $c' \colon P \to K$ such that $c'|_{S^1} = c$. 

A function $f \colon \N \to \N$ is called a \emph{combinatorial isoperimetric bound} for $K$ if every combinatorial loop $c$ in $K$ can be capped off with at most $f(l_C(c))$ many triangles. We say $K$ satisfies a \emph{linear $\cii$} if there exists a combinatorial isoperimetric bound $f$ for $K$ such that $f(n) = \mathrm{O(n)}$. We say \emph{$K$ satisfies no combinatorial isoperimetric inequality} if no combinatorial isoperimetric bound for $K$ exists. 

\subsection{Outer space}
We say an $\R$-tree is an $\free$-tree if it admits a non-trivial action of $\free$. We denote by $\cv$ the (unprojectivized) Outer space of $\free$, defined in \cite{CV:OuterSpace}, consisting of minimal, metric, simplicial $\free$-trees up to equivariant isometry. Let $A$ be a free factor of $\free$. Let $\relcv{A}$ be the unprojectivised \emph{Outer space relative to $A$} (\cite{GL:relCV}), that is, the space of minimal, metric, simplicial $\free$-trees with trivial edge stabilizers and vertex group system equal to the conjugacy class of $A$, up to equivariant isomorphism. Here the \emph{vertex group system} of a simplicial $\free$-tree is the collection of conjugacy classes of its vertex stabilizers. Let $\relCV{A}$ denote the projectivized outer space relative to $A$. For $T \in \relcv{A}$, the \emph{covolume of $T$} is the sum of the lengths of edges of $T/ \free$, which is a finite graph. Then we think of points in $\relCV{A}$ as covolume 1 trees.

Let $\relFreeS{A}$ be the free splitting graph relative to $A$ (\cite{HM:Relative}). A vertex is given by minimal, simplicial $\free$-tree $T$, without the metric, with trivial edge stabilizers and such that $A$ is elliptic in $T$, up to equivariant isomorphism. Two such trees $T_1, T_2$ are joined by an edge if there is an equivariant collection of edges in $T_1$ that can be collapsed to obtain $T_2$. 
There is a natural function from $\relCV{A}$ to $\relFreeS{A}$, given by forgetting the metric on the tree, such that the image is contained in the vertex set of $\relFreeS{A}$.  

\subsection{Train track structure and morphism}
Let $T$ be a simplicial $\free$-tree. A direction $d$
based at $p \in T$ is a component of $T - \{p\}$. A turn is an unordered pair of directions based at the same point. An \emph{illegal turn structure} on $T$ is an equivariant equivalence relation on the set of directions at each point $p \in T$. The classes of this relation are called \emph{gates}. A turn $(d,d')$ is \emph{legal} if $d$ and $d'$ do not belong to the same gate. If in addition there are at least two gates at every point of $T$, then the illegal turn structure is called a \emph{train track structure}. A path is legal if it only crosses legal turns.

Given two $\free$-trees $T$ and $T'$, an $\free$-equivariant
map $f\colon T \to T'$ is called a \emph{morphism} if every
segment of $T$ can be equivariantly subdivided into finitely many subintervals
such that $f$ is an isometry when restricted to each subinterval.
A morphism between $\free$-trees induces an illegal turn structure on the domain $T$.  A morphism is called
\emph{optimal} if there are at least two gates at each point of
$T$. See \cite{BF:Hyperbolicity, BF:Subfactor} for more details. 
\subsection{Folding}
Let $T, T' \in \relcv{A}$ . 
A \emph{folding path with its natural parametrization}
$(T_t)_{t\in\mathbb{R}^+}$, guided by an optimal morphism
$f\colon T\to T'$ can be defined as follows (see \cite[Section 2]{BF:Hyperbolicity} and \cite[Section 3]{GL:relCV}):  Given
$a,b\in T$ with $f(a)=f(b)$, the \emph{identification time} of $a$ and
$b$ is defined as $\tau(a,b)=\sup_{x\in[a,b]}d_{T'}(f(x),f(a))$.
Define $L:=\frac{1}{2}\BBT(f)$, where $\BBT(f)$ is the bounded backtracking constant for $f$. This is the smallest constant $C$ such that the $f$ image of any path $[p,q]$ is contained in a $C$-neighborhood of $[f(p),f(q)]$.     For each $t\in [0,L]$, one defines an
equivalence relation $\sim_t$ by $a\sim_t b$ if $f(a)=f(b)$ and
$\tau(a,b)<t$.  The tree $T_t$ is then a quotient of $T$ by the
equivalence relation $\sim_t$.  The authors of \cite{GL:relCV} prove that for each
$t\in[0,L]$, $T_t$ is an $\mathbb{R}$-tree. 
Let $\overline{T}, \overline{T'}$ be the covolume 1 representatives of $T$ and $T'$ in $\relCV{A}$. Then a folding path $({S_t})_t$ between them is the projection of the folding path between $T$ and $T'$. In other words, $S_t$ is obtained by rescaling $T_t$ to covolume 1.

 \section{Coarsely Lipschitz map to \texorpdfstring{$\Z$}{Z}} \label{sec:CL function}
 Let $A$ be a free factor of $\free$ and let $B$ be a complementary free factor, that is, $\free = A \ast B$. Choose a filling element $w \in B$, that is, $w$ is not contained in any proper free factor of $B$. Also choose $w$ such that it is cyclically reduced and not a power of another element. Choose $b \in B$ primitive, that is, a basis element.  
Let $\FreeF^{\uparrow}(A)$ be the subcomplex of $\FreeF$ whose vertices are given by conjugacy classes of free factors that properly contain $A$ up to conjugation. In other words, $\FreeF^{\uparrow}(A)$ is the `upward link' of the vertex corresponding to $A$ in $\FreeF$. The \emph{corank} of $A$ is the rank of any complementary free factor of $A$.

In this section, we will define a coarsely Lipschitz map $\Psi_{w,b,B} \colon \FreeF^{\uparrow}(A) \to \Z$ and use it to prove our main theorem in the next section.  

\begin{lemma}\label{connected}
 If corank of $A$ is at least 3, then $\FreeF^{\uparrow}(A)$ is connected. 
\end{lemma}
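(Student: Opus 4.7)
The plan is to reduce connectedness of $\FreeF^{\uparrow}(A)$ to the connectedness of the free factor complex of a complementary free factor of $A$. Fix a decomposition $\free = A \ast B$ where $\mathrm{rank}(B) = \mathrm{corank}(A) \geq 3$; the free factor complex $\FreeF_{\mathrm{rank}(B)}$ of $B$ is known to be connected (Hatcher--Vogtmann).

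First, I reduce to corank-one vertices: given $[X] \in \FreeF^{\uparrow}(A)$ with $\mathrm{corank}(X) \geq 2$, a complement $X^c$ of $X$ in $\free$ has rank at least two, so adjoining any primitive $x \in X^c$ gives a proper corank-one free factor $X \ast \la x\ra$ strictly containing $X$ (and hence $A$). Thus $[X]$ is edge-adjacent in $\FreeF^{\uparrow}(A)$ to a corank-one vertex, and it suffices to connect corank-one vertices.

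Second, I would show every corank-one vertex has a representative of the form $A \ast C$ with $C$ a free factor of $B$ of rank $\mathrm{rank}(B) - 1$. Applying Bass--Serre theory to the action of a representative $X \supseteq A$ on the Bass--Serre tree of the splitting $\free = A \ast B$: since $X$ is itself a free factor of $\free$, the vertex stabilizers of the induced graph-of-groups decomposition of $X$ are free factors of the relevant conjugates of $A$ and $B$. The $A$-type vertex contributes the factor $A$, while the $B$-type contribution is a free factor of some conjugate of $B$; after a suitable conjugation of $X$ within $\free$, this yields $X = A \ast C$ with $C$ a free factor of $B$.

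Finally, given two corank-one vertices $[A \ast C_1]$ and $[A \ast C_2]$ of $\FreeF^{\uparrow}(A)$, the connectedness of $\FreeF_{\mathrm{rank}(B)}$ yields a path from $[C_1]_B$ to $[C_2]_B$; lifting to actual subgroup inclusions in $B$ (which is possible step-by-step by conjugating representatives within $B$) and applying $C \mapsto A \ast C$ produces a chain of inclusions in $\FreeF^{\uparrow}(A)$ from $[A \ast C_1]$ to $[A \ast C_2']$ for some $C_2' \in [C_2]_B$. To connect $[A \ast C_2']$ to $[A \ast C_2]$: for $b \in B$ with $C_2' = bC_2 b^{-1}$, both $A \ast C_2$ and $A \ast b C_2 b^{-1}$ are contained in $A \ast (C_2 \ast \la b\ra)$ as subgroups, giving a length-two path in $\FreeF^{\uparrow}(A)$ whenever $C_2 \ast \la b\ra$ is a proper free factor of $B$; when $C_2$ has rank $\mathrm{rank}(B) - 1$ this fails, so first drop down to a sub-free-factor of $C_2$ of smaller rank to unlock the needed room, which is available because $\mathrm{rank}(B) \geq 3$. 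The main technical obstacle is the Bass--Serre step above, showing that the induced splitting of a free factor of $\free = A \ast B$ has free-factor vertex stabilizers, together with the conjugation bookkeeping that glues these ingredients into a genuine path in $\FreeF^{\uparrow}(A)$.
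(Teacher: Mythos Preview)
Your step 2 is false: not every corank-one free factor containing $A$ has a representative of the form $A \ast C$ with $C$ a free factor of $B$. Take $\free = \la a,b,c,d\ra$, $A = \la a\ra$, $B = \la b,c,d\ra$, and $X = \la a,\,b,\,dac\ra$; since $\{a,b,c,dac\}$ is a basis, $X$ is a corank-one free factor containing $A$. The element $dac$ is hyperbolic in the Bass--Serre tree of $A\ast B$, and the Kurosh decomposition of $X$ is $A \ast \la b\ra \ast \la dac\ra$, with $\la dac\ra$ arising as the free part from a loop in the quotient graph of groups. In particular $X \cap B = \la b\ra$ has rank $1$, so no rank-$2$ complement of $A$ in $X$ can lie in $B$. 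Conjugation does not help: if $gXg^{-1} \supset A$ then $A$ and $gAg^{-1}$ are both contained in the free factor $gXg^{-1}$ and are conjugate in $\free$, hence conjugate in $gXg^{-1}$, forcing $g \in X$ and $gXg^{-1}=X$. The hidden assumption in your Bass--Serre step---that the induced splitting of $X$ has a single $B$-type vertex and no free part---is exactly what fails.

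The paper's proof avoids this difficulty by not attempting any normal form relative to a fixed complement $B$. Instead it works in Outer space: for any marked graph $G$ with a subgraph $H$ carrying $A$, the free factors coming from subgraphs of $G$ containing $H$ form a set $\Pi(G)$ of diameter at most $4$ in $\FreeF^{\uparrow}(A)$ (using that $G\setminus H$ has rank $\ge 3$), and any two such marked graphs are connected by a folding path keeping an invariant subgraph with fundamental group $A$, along which consecutive $\Pi$-sets overlap. Your approach could perhaps be repaired by replacing the free factor complex of $B$ with a relative complex that genuinely parametrizes $\FreeF^{\uparrow}(A)$, but as written the reduction to $\FreeF_{\mathrm{rank}(B)}$ does not go through.
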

\begin{proof}

Let $G = T/\free$ for a tree $T \in \relCV{A}$ with vertex $v \in G$ stabilized by $A$. Let $P$ and $Q$ be two connected subgraphs of $G$ that contain $v$ and are not trees. Then the fundamental group of $P$ and $Q$ as graph of groups determine two free factors $\dot{P}$ and $\dot{Q}$ in $\FreeF^{\uparrow}(A)$. We claim that $d_{\FreeF^{\uparrow}(A)}(\dot{P}, \dot{Q}) \leq 6$ (see \cite[Section 3]{BF:Hyperbolicity}). Indeed, let $P'$ be a connected subgraph of $G$ containing $P$ and all edges of $G$ except one, say $p$. Similarly let $Q'$ be a connected subgraph of $G$ contanining $Q$ and containing all edges of $G$ except one, say $q$. Then $P' \cap Q' = G \setminus \{p,q\}$. If $P' = Q'$, then $\dot{P}, \dot{P'}=\dot{Q'}, \dot{Q}$ is a path of length 2 in $\FreeF^{\uparrow}(A)$. Suppose $P' \neq Q'$ and $P' \cap Q'$ is connected. Since corank of $A$ is at least 3, $P' \cap Q'$ is not a tree and contains a loop $R$ based at $v$ with $\dot{R}\in \FreeF^{\uparrow}(A)$. Then $\dot{P}, \dot{P'}, \dot{R}, \dot{Q'}, \dot{Q}$ is a path of length 4 in $\FreeF^{\uparrow}(A)$. Suppose $P' \cap Q'$ is not connected. Then the complement of the interior of the edges $p$ and $q$ has exactly two components because $P', Q'$ are connected subgraphs containing all but one edge each. If the component containing $v$ contains a loop $R$ based at $v$, then we get a path $\dot{P}, \dot{P'}, \dot{R}, \dot{Q'}, \dot{Q}$ of length 4 in $\FreeF^{\uparrow}(A)$. Otherwise, the other component contains a loop $R$ containing an end point of both $p$ and $q$. See \Cref{Distance6}. Then the subgraphs $R_p = \{p\} \cup R, R_q = \{q\} \cup R , R_{pq} = \{p, q\} \cup R$ all contain $v$. We get a path $\dot{P}, \dot{P'}, \dot{R_q}, \dot{R_{pq}}, \dot{R_p}, \dot{Q'}, \dot{Q}$ of length 6 in $\FreeF^{\uparrow}(A)$ as desired. Let $\Pi(G)$ denote the collection of free factors in $\FreeF^{\uparrow}(A)$ arising from subgraphs of $G$. Then we just showed that $\Pi(G)$ is a connected set of diameter at most 6.
\begin{figure}[ht]
    \centering{
    \def\svgwidth{.5\columnwidth}
    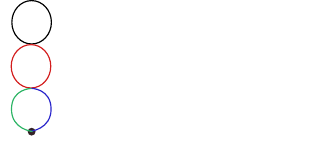
    \caption{Graph $G$ and some subgraphs as in the last case of proof of \Cref{connected}.}
    \label{Distance6}
    }
\end{figure}

Now for $X, Y \in \FreeF^{\uparrow}(A)$, let $G_X$ and $G_Y$ be two marked metric graphs given as the quotient of two $\free$-trees in $\relCV{A}$ such that $X \in \Pi(G_X)$, $Y \in \Pi(G_Y)$. By 
\cite[Proposition 7.5, Theorem 7.7]{FM:FreeProductsTT}, there is a folding path between $G_X$ and $G_Y$ obtained by a continuous parametrization of Stallings foldings. The folding path maps to a path connecting $X$ and $Y$ in $\FreeF^{\uparrow}(A)$. Indeed, if one performs an elementary Stallings fold on $G_X$, that is fold two edges to obtain a new marked graph $G'$, then $\Pi(G_X)$ and $\Pi(G')$ intersect non-trivially. Thus we can produce a path joining $X$ and $Y$ in $\FreeF^{\uparrow}(A)$.  
\end{proof}

Let $X$ be the conjugacy class of a free factor in $\FreeF^{\uparrow}(A)$ and let $T \in \relCV{X}$. The minimal subtree of $w$ in $T$ is the axis of $w$ and the core of $T/\la w \ra$ is a circle, denoted $\core{}$.  The restriction of the metric on $T$ to the axis of $w$ defines a metric on $\core{}$. 

\begin{lemma}\label{lem:w and b} Let $w$ be filling in $B$ and let $T \in \relCV{X}$. Then $w$ is a hyperbolic isometry of $T$. 
\end{lemma}
\begin{proof}
Let $T_B$ be the minimal subtree of $T$ invariant under $B$. Since $B$ is not contained in $X$, $T_B$ is a non-trivial free splitting of $B$. If $w$ is elliptic in $T$, then $w$ is also elliptic in $T_B$. This means that $w$ is contained in a vertex group of $T_B$ which is a free factor of $B$. This is a contradiction since $w$ is filling in $B$. 
\end{proof}

\begin{lemma} \label{lem:cross at least twice}
Let $w$ be filling in $B$ and let $T \in \relCV{X}$. Then $\core{}$ crosses the orbit of every edge of $T$ at least twice. \end{lemma}
\begin{proof}
Suppose $\core{}$ does not cross the orbit of an edge $E$ of $T$. Let $T'$ be the tree obtained by collapsing all edges except the orbit of $E$. Then $T'$ is a free simplicial tree with the vertex group system $Y$ of $T'$ contained in $\FreeF^{\uparrow}(A)$ and $w$ elliptic in $T'$. This is a contradiction to \Cref{lem:w and b} for $T' \in \relCV{Y}$. 

Now suppose $\core{}$ crosses the orbit of some edge $E$ of $T$ exactly once. Then we claim that $w$ is a primitive element of $\free$, which is a contradiction. 
Let $G = T/ \free$ be the quotient graph of groups, containing the edge $E$. Let $\hat{G}$ be a marked $\free$-graph obtained by blowing up the non-trivial vertex stabilizer of $G$. Finally let $\overline{G}$ be a marked rose obtained from $\hat{G}$ by collapsing a maximal tree. Since a loop in $G$ corresponding to $w$ crosses $E$ exactly once, $E$ is not a separating edge of $G$. Therefore, a maximal tree of $\hat{G}$ can be chosen to avoid $E$. Now if $R$ is a marked rose for $\free$ with edges labelled $e_1, \ldots e_n$ and $\alpha$ is a loop that crosses $e_n$ exactly once, then $\{e_1, \ldots, e_{n-1}, \alpha\}$ induces a basis of $\free$. Applying this to $\overline{G}, w$ and $E$, we get the desired contradiction. 
\end{proof}

For any $T \in \relCV{X}$, let $o$ denote the base point, that is, the vertex fixed by $A$. 
For $g \in \free$, let $\mathrm{a}_{T}(g)$ denote the axis or fixed point set of $g$ acting on $T$ and $\tau_T(g)$ denote the translation length.   Recall $b \in B$ is a fixed primitive element. 
Let $L_{T}(b)$ be the shortest path from $o$ to $\mathrm{a}_{T}(b)$ in $T$. We will call it the \emph{leg of $b$} in $T$. Let
\[\Phi_{w,b,B}(T) := \left \lceil \frac{\diam_{T}(L_{T}(b) \cap \mathrm{a}_{T}(w))}{\tau_{T}(w)} \right \rceil .\] 

Informally, $\Phi_{w,b,B}(T)$ is the number of fundamental domains of $w$ crossed by the leg of $b$ in $T$. See \Cref{axis} for an illustration of the definition. For comparison, Clay--Pettet \cite{ClayPettet} define the \emph{relative twist of $T$ and $b$ relative to $w$} as the supremum of the number of fundamental domains of $w$ in the intersection of the axis of $w$ and axes of all conjugates of $b$ in $T$. In our setting, this number is one since $ b \in B$ is primitive and $w$ is filling in $B$. 

\begin{figure}[ht]
    \centering{
    \def\svgwidth{.9\columnwidth}
    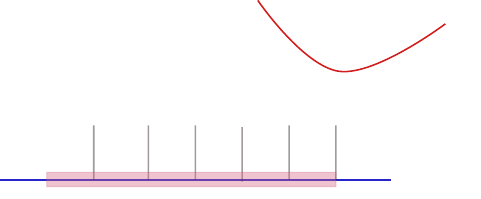
    \caption{In this figure $\Phi_{w,b,B}(T) = 6$.}
    \label{axis}
    }
\end{figure}

\begin{example} 
This example is to illustrate the setup in the above definition. Let $\free = \la a,b,c,d \ra$ and $w=c^2b^2d^2c^2$. The Whitehead graph of $w$ with respect to the basis $\{b,c,d\}$ has no cut vertex therefore it is filling in $\la b,c,d\ra$ (see \cite{LyndonSchupp}). Now consider a new basis $\{a,x,c,d\}$ where $b = d^{-1}c^{-1}xcd$. Then in the new basis $w = c^2 d^{-1}c^{-1} x cd^3c^2$. Let $T$ be the Bass-Serre tree of a graph of groups with one vertex group stabilized by $\la a \ra$ and three loops labeled $x,c,d$. Then in $T$, the axis of $w$ and $b$ are disjoint. 
\end{example}

\begin{example}\label{example}
Let $\free = \la a, b, c, d \ra$ and $w = b^2c^2d^2 \in \la b, c, d \ra$. The Whitehead graph of $w$ with respect to the basis $\{b,c,d\}$ has no cut vertex therefore $w$ is filling in $\la b,c,d \ra$. Let $A = \la a \ra, B = \la b,c,d \ra$ and let $X = [\la a, b \ra]$ and $Y = [\la a, w^N b w^{-N} \ra]$ be two points in $\FreeF^{\uparrow}(A)$ for some $N >0$. 
Let $T_X$ be the Bass-Serre tree of the graph of groups with one vertex stabilized by $X$ and two loops labeled by $c$ and $d$. Similarly, let $T_Y$ be the Bass-Serre tree of the graph of groups with one vertex stabilized by $Y$ and two loops labeled $w^N c w^{-N}$ and $w^N d w^{-N}$. Then $\Phi_{w,b,B}(T_X) = 0$ and $\Phi_{w,b,B}(T_Y) = N$. 
\end{example}

The next proposition shows that $\Phi_{w,b,B}(T)$ coarsely depends only on $X$ and not the choice of $T \in \relCV{X}$.  
\begin{proposition}\label{main prop}
 There exists a constant $C = 2 \mathrm{corank}(A) -2$, 
 such that the following holds: For $X \in \FreeF^{\uparrow}(A)$, let $T$ and $T'$ be two free splittings in $\relFreeS{X}$ with vertex group systems equal to $X$. Then $|\Phi_{w,b,B}(T) - \Phi_{w,b,B}(T')| \leq C$.  
\end{proposition}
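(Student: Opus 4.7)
The plan is to connect $T$ and $T'$ by a folding path in $\relCV{X}$ and control the oscillation of $\Phi_{w,b,B}$ on each of the three segments produced by \Cref{active intervals} applied to $w$. First I would observe that $\Phi_{w,b,B}$ is projectively invariant: a global rescaling of $T$ scales both $\diam_T(L_T(b)\cap\mathrm{a}_T(w))$ and $\tau_T(w)$ by the same factor. Hence I can choose covolume-$1$ representatives of $T$ and $T'$ in $\relCV{X}$ and, using the existence of folding paths in relative outer space, an optimal morphism $f\colon T\to T'$ inducing a folding path $(T_t)_{t\in[\alpha,\delta]}$ with $T_\alpha=T$ and $T_\delta=T'$. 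By \Cref{lem:w and b}, $w$ is hyperbolic on every $T_t$, so \Cref{active intervals} applies and partitions the interval as $[\alpha,\beta)\cup[\beta,\gamma)\cup[\gamma,\delta]$.

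Next I would bound the change in $\Phi_{w,b,B}$ on each segment, exploiting that folds along $\mathrm{a}_{T_t}(w)$ happen only on the first segment (for $t\in[\beta,\delta]$ the core has no illegal turns, as established in the proof of \Cref{active intervals}). On the middle segment $[\beta,\gamma)$ the core has volume at most $2$, so $\tau_{T_t}(w)\leq 2$; combined with \Cref{pre-image distance}, I would argue that the morphism $f_{\gamma,\beta}\colon T_\beta\to T_\gamma$ has bounded pre-images along the $w$-axis and along the leg of $b$, which forces $L_{T_t}(b)\cap\mathrm{a}_{T_t}(w)$ to change by at most a bounded amount in units of $\tau_{T_t}(w)$, giving bounded change in $\Phi$. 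On the last segment $[\gamma,\delta]$ no folds occur on $\mathrm{a}_{T_t}(w)$ and $\tau_{T_t}(w)\geq 2$; the $w$-axis is combinatorially preserved, and tracking how the leg of $b$ is affected by the remaining folds, the ceiling in the definition of $\Phi_{w,b,B}$ absorbs any small perturbation, giving bounded change. On the first segment $[\alpha,\beta)$ folds on the $w$-axis do occur, but the combinatorial complexity of $\core{t}$ is uniformly bounded by the rank $n$ (the quotient $T_t/\free$ has boundedly many edges), so only boundedly many unfolding events can take place; by the trichotomy in \Cref{unfolding}, each such event changes $\Phi$ by at most a constant.

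Summing the three bounds would yield $|\Phi_{w,b,B}(T)-\Phi_{w,b,B}(T')|\leq C$ with $C=C(n)$. I expect the hardest part to be the middle-segment analysis: although folds on the $w$-axis are forbidden there, folds elsewhere in the tree can nevertheless move the leg $L_{T_t}(b)$ and hence alter its intersection with $\mathrm{a}_{T_t}(w)$, and one must rule out that these folds create or destroy a long coincidence with the $w$-axis. The key technical ingredients will be \Cref{pre-image distance} (to uniformly control the combinatorial distance in $\relFreeS{X}$ in terms of the rank) together with the hypotheses that $b$ is primitive in $B$ and $w$ is filling in $B$, which together force $b$'s axis to interact with $w$'s axis in a structurally constrained manner and prevent the intersection from changing with arbitrarily large multiplicity.
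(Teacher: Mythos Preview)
Your overall strategy---connect $T$ to $T'$ by a folding path in $\relCV{X}$ and bound the change in $\Phi_{w,b,B}$ on each of the three intervals from \Cref{active intervals}---is exactly the paper's approach. However, your treatment of the first and last segments misses the actual mechanism, and the argument as written would not go through.

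On the last segment you assert that ``the ceiling absorbs any small perturbation,'' and on the first segment you assert that ``only boundedly many unfolding events can take place.'' Neither is correct as stated: the folding path is continuous (there is no a priori bound, in terms of $n$ alone, on the number of gadget-folding subintervals), and the leg of $b$ can in principle be dragged along $\mathrm{a}_t(w)$ by folds coming from edges outside the axis, so the ceiling does not automatically absorb this. The paper handles both segments with the \emph{same} argument: on the first segment the illegal turn in $\core{t}$, and on the last segment the legal piece of length at least $2$, each force that \emph{any single edge of $T_t$ can fold over $\mathrm{a}_t(w)$ by at most one fundamental domain of $w$}. Since a maximal tree in $T_t/\free$ has at most $2n-3$ edges, this means $f_{s,t}^*$ can conjugate $b$ by at most a bounded power of $w$, and hence $\Phi$ changes by a bounded amount across the entire segment. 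You should replace your event-counting and ceiling arguments with this ``one fundamental domain per edge'' bound.

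For the middle segment you are close in spirit but missing two ingredients. First, the paper begins by observing (before introducing the folding path at all) that a single collapse in $\relFreeS{X}$ changes $\Phi_{w,b,B}$ by at most $1$; this is what converts a distance bound in $\relFreeS{X}$ into a bound on $\Phi$. Second, the way \Cref{pre-image distance} is invoked is sharper than you indicate: because $w$ is filling in $B$ and not a proper power, $\core{t}$ must cross the orbit of every edge \emph{exactly twice}; since $\core{t}$ is legal on the middle segment, preimages of points under $f_{s,t}$ are single points, and then \Cref{pre-image distance} gives a uniform bound on $d_{\relFreeS{X}}(T_t,T_s)$. Combining these two ingredients yields the middle-segment bound. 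Your hypotheses on $b$ primitive and $w$ filling are used here, but for the ``exactly twice'' count rather than for the interaction of axes that you describe.
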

\begin{proof}
First suppose that $T$ and $T'$ are distance one apart in $\relFreeS{X}$, that is, there is a collapse map from $T$ to $T'$. Since $w$ is hyperbolic in both $T$ and $T'$, the collapse map changes the number of fundamental domains of $w$ crossed by the leg of $b$ by at most one. Therefore, $|\Phi_{w,b,B}(T) - \Phi_{w,b,B}(T')| \leq 1$.  

Now consider $T'$ as a point in $\relCV{X}$, with some metric and, up to changing $T$ to another tree in a simplex in $\relCV{X}$ containing $T$, consider an optimal morphism $f \colon T \to T'$, where $T$ has the pull back metric. Let $(T_t)_{t \in [0,L]}$ be a folding path with natural parametrization guided by $f$ where $T_0 = T$ and $T_{L} = T'$. 
Note that $\core{}$ is a loop. Let 
$\mathrm{a}_T(w)$ denote the axis of 
$w$ in $T$ and let $\tau_T(w)$ be the translation length of $w$ in $T$. 

By \Cref{lem:cross at least twice}, $\core{}$ crosses the orbit of every edge of $T$ at least twice. Therefore, volume of $\core{}$, which is same as $\tau_T(w)$, is at least twice the covolume of $T$. Let $E$ be any edge of $T$ not in $\mathrm{a}_T(w)$. Then under the folding map $f$, $E$ cannot fold over $\mathrm{a}_T(w)$ for a length more than $\tau_T(w)$ because the length of $E$ is strictly less than $\tau_T(w)$.  

We claim that $|\Phi_{w,b,B}(T) - \Phi_{w,b,B}(T')|$ is uniformly bounded. For $T$, let $G$ denote the quotient graph under the action of $\free$. The fundamental group of $G$, as a graph of groups, is a free product $X * F_m$, where $m$ is the corank of $X$. Choose a maximal tree in $G$. Then the edges not in this maximal tree correspond to a basis for $F_m$. Fix one such partial basis coming from $G$ and one from $G' = T'/\free$. 

Since an edge of $T$ folds over at most one fundamental domain of $w$, we get that the image under $f^*$ of a basis element of $\pi_1(G)$, as chosen above, acquires a suffix or prefix $w^{C}$ for $C$ at most one plus the number of edges in a maximal tree in $G$. The latter is at most $2 \mathrm{corank}(A)-3$. 
This implies that the difference in the number of fundamental domains of $w$ crossed by the leg of $b$, in $T$ and $T'$, is uniformly bounded by $C = 2 \mathrm{corank}(A) -2$.  
\end{proof}

We are now ready to define a coarsely Lipschitz map from $\FreeF^{\uparrow}(A) \to \Z$. A map $\phi \colon X \to Y$ between two metric spaces is called \emph{coarsely Lipschitz}, if there are constants $C_1 >0, C_2 \geq 0$ such that $d_Y(\phi(x), \phi(x')) \leq C_1 d_X(x,x') + C_2$. Choose a complementary free factor $B$ (of $A$) of rank at least 3, and, choose $b, w \in B$ such that $b$ is primitive in $B$ and $w$ is a cyclically reduced filling element in $B$ that is not a power of another element. Define $\Psi_{w,b,B} \colon \FreeF^{\uparrow}(A) \to \Z$ by setting $\Psi_{w,b,B}(X)$ equal to $\Phi_{w,b,B}(T)$ for any $T \in \relFreeS{X}$ with vertex group system equal to $X \in \FreeF^{\uparrow}(A)$.

\begin{proposition}\label{coarsely Lip}
 For a free factor $A$ of $\free$ of corank at least 3, the map $\Psi_{w,b,B} \colon \FreeF^{\uparrow}(A) \to \Z$ is coarsely Lipschitz. 
\end{proposition}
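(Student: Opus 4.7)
The plan is to reduce the coarsely Lipschitz property directly to ingredients already in hand: \Cref{main prop} (well-definedness up to an additive constant $C$) and the single-collapse bound established in the first paragraph of its proof.

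First I would set up the comparison across a single edge of $\FreeF^{\uparrow}(A)$. Let $X, Y \in \FreeF^{\uparrow}(A)$ be adjacent, say $X \subsetneq Y$ up to conjugation. By the Kurosh subgroup theorem $X$ is a free factor of $Y$, so $Y$ admits a free splitting with vertex group system equal to $\{X\}$. Pick any $T_Y \in \relFreeS{Y}$ with vertex group system $\{Y\}$ and blow up its unique vertex orbit using such a splitting of $Y$. The resulting tree $T_X$ lies in $\relFreeS{X}$, has vertex group system $\{X\}$, and collapses back to $T_Y$ by collapsing at most $\mathrm{rank}(Y) - \mathrm{rank}(X) \leq n$ edge orbits. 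Note also that $w$ remains hyperbolic in every intermediate tree: the argument of \Cref{lem:w and b} applies equally to any vertex group on the path between $\{X\}$ and $\{Y\}$, since each such group is contained in $Y$, and $w$ being filling in $B$ prevents it from lying in any conjugate of $Y$.

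Next I would iterate the single-edge collapse bound from the first paragraph of the proof of \Cref{main prop}: each elementary collapse changes $\Phi_{w,b,B}$ by at most one. Accumulating over at most $n$ collapses yields
\[
|\Phi_{w,b,B}(T_X) - \Phi_{w,b,B}(T_Y)| \leq n.
\]

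Finally I would pass from the specific representatives $T_X, T_Y$ to the function values $\Psi_{w,b,B}(X), \Psi_{w,b,B}(Y)$. Applying \Cref{main prop} twice absorbs the ambiguity in the definition of $\Psi_{w,b,B}$: any two valid representatives at a given vertex differ in $\Phi_{w,b,B}$ by at most $C$. Combining, $|\Psi_{w,b,B}(X) - \Psi_{w,b,B}(Y)| \leq 2C + n$ for every pair of adjacent vertices in $\FreeF^{\uparrow}(A)$, which is the coarsely Lipschitz bound sought. I do not expect a real obstacle here: the entire technical weight of the result has already been spent on \Cref{main prop}, and what remains is essentially a bookkeeping reduction through a single blow-up and its associated chain of collapses.
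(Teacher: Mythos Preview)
Your proposal is correct and follows essentially the same approach as the paper: invoke \Cref{main prop} for coarse well-definedness, then compare $\Phi_{w,b,B}$ across a single collapse between compatible representatives $T_X$ and $T_Y$. The only difference is cosmetic: you decompose the collapse into at most $n$ elementary edge-orbit collapses to obtain the bound $n$, whereas the paper observes directly that a single collapse (of the entire subforest at once) changes $\Phi_{w,b,B}$ by at most $1$, since $w$ is hyperbolic before and after and hence fundamental domains of $w$ map bijectively to fundamental domains.
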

\begin{proof}
Consider two vertices $X$ and $Y$ in $\FreeF^{\uparrow}(A)$ joined by an edge, with $Y \subset X$, up to conjugation. Pick $T_X \in \relFreeS{X}$ with vertex group system $X$ and $T_Y \in \relFreeS{Y}$ with vertex group system $Y$, such that there is a proper forest of $T_Y$ which is collapsed to obtain $T_X$. This forest is the $\free$-orbit of a minimal subtree of $X$ in $T_Y$. Since $w$ is hyperbolic in both $T_X$ and $T_Y$, under the collapse map a fundamental domain of the axis of $w$ in $T_X$ maps to a unique non-degenerate fundamental domain of the axis of $w$ in $T_Y$. Thus $|\Phi_{w,b,B}(T_X) - \Phi_{w,b,B}(T_Y)| \leq 1$. 
Now let $X$ and $Y$ be two vertices of $\FreeF^{\uparrow}(A)$ joined by a geodesic $X=X_0, X_1, \ldots, X_k = Y$ and corresponding trees $T_X, T_{X_1}, \ldots, T_{X_{k-1}}, T_Y$. We will drop the subscripts $w,b,B$ for the remainder of the proof. We have \[|\Phi(T_X) - \Phi(T_Y)| \leq |\Phi(T_X) -\Phi(T_{X_1})| + \ldots + |\Phi(T_{X_{k-1}}) -\Phi(T_Y)| \leq d_{\FreeF^{\uparrow}(A)}(X,Y)\]

By \Cref{main prop}, $|\Psi (X) - \Psi(Y)| \leq |\Phi(T_X) - \Phi(T_Y)| + 2C$. Thus we have \[|\Psi (X) - \Psi(Y)| \leq d_{\FreeF^{\uparrow}(A)}(X,Y) + 2C\]
 and hence $\Psi_{w,b,B}$ is a coarsely Lipschitz map. 
\end{proof}

\section{Proof of main theorem}
We are now ready to prove the main theorem.

\begin{theorem}
Let $\FreeF$ be the free factor complex of free group of rank $n \geq 4$. There exists a constant $C = 2n-4$ and a family of loops $c_N$ of combinatorial length 4 in $\FreeF^{(1)}$ such that the following holds: whenever $P$ is a triangulation of a disc and $f\colon P \to \FreeF^{(2)}$ is a simplicial map with $f|_{\partial P}$ mapping bijectively onto $c_N$, then $P$ must have at least $(N-2C)/3$ triangles. 
\end{theorem}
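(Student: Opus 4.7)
The plan is to analyze the specific length-four loop $c_N$ shown in \Cref{loop}, with vertex sequence $[\langle a \rangle], [\langle a, b \rangle], [\langle b \rangle], [\langle a, w^N b w^{-N} \rangle]$, where $A := [\langle a \rangle]$ has rank one, $B := \langle b, c_1, \ldots, c_{n-2} \rangle$ is a complementary free factor of rank $n-1 \geq 3$, $w \in B$ is a cyclically reduced filling element of $B$ that is not a proper power (as in \Cref{example}), and $b \in B$ is a basis element. Each consecutive pair spans a $1$-simplex in $\FreeF_n$ since, up to conjugation, one factor sits inside the other; thus $c_N$ is indeed a combinatorial loop of length four.

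Let $f \colon P \to \FreeF_n^{(2)}$ be any simplicial disc filling with $f|_{\partial P}$ mapping bijectively onto $c_N$, and let $v_A \in \partial P$ be the unique boundary vertex with $f(v_A) = A$. The core step is to extract from $P$ a combinatorial path $\gamma$ in $\FreeF^{\uparrow}(A)^{(1)}$ from $[\langle a, b \rangle]$ to $[\langle a, w^N b w^{-N} \rangle]$ whose length is bounded above by the number of triangles of $P$ incident to $v_A$. Since $v_A$ lies on $\partial P$, its link $\Lambda$ in $P$ is a simplicial arc whose endpoints are the two $\partial P$-neighbors of $v_A$, and these map respectively to $[\langle a, b \rangle]$ and $[\langle a, w^N b w^{-N} \rangle]$. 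Every interior vertex of $\Lambda$ is adjacent to $v_A$ in $P$, so its $f$-image is adjacent or equal to $A$ in $\FreeF_n$; because $A$ has rank one, this image lies in $\FreeF^{\uparrow}(A) \cup \{A\}$. Assuming for the moment that no interior vertex of $\Lambda$ maps to $A$, the image $f(\Lambda)$ is the desired path $\gamma$, of length equal to the number of edges of $\Lambda$, i.e.\ the number of triangles of $P$ incident to $v_A$.

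Next I evaluate $\Psi_{w,b,B}$ at the endpoints. Taking the explicit Bass-Serre trees of \Cref{example} (one with vertex group $\langle a, b \rangle$ and free loops for the remaining generators of $B$, the other with vertex group $\langle a, w^N b w^{-N} \rangle$ and the correspondingly twisted loops) gives $\Psi_{w,b,B}([\langle a, b \rangle]) = 0$ and $\Psi_{w,b,B}([\langle a, w^N b w^{-N} \rangle]) = N$. Applying \Cref{coarsely Lip} to $\gamma$ then yields $\mathrm{length}(\gamma) \geq (N - C')/C$ for constants $C, C'$ depending only on $n$. Combined with the previous step, $P$ must contain at least $\mathrm{O}(N)$ triangles, as required.

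The main obstacle is the case where some interior vertices of $\Lambda$ themselves map to $A$, so that $f(\Lambda)$ is only a walk in $\FreeF_n^{(1)}$ that repeatedly visits $A$ rather than a simplicial path in $\FreeF^{\uparrow}(A)^{(1)}$; in that situation consecutive non-$A$ vertices along $f(\Lambda)$ need not be adjacent in $\FreeF^{\uparrow}(A)$ and the coarse Lipschitz bound cannot be applied directly. I plan to handle this by pre-processing $P$: take $P$ to be a disc filling of $c_N$ with the minimum number of triangles, and whenever two vertices both mapping to $A$ are joined by an edge $e$, collapse $e$, verifying the simplicial link condition (and passing to a local subdivision if it fails) so that the triangle count strictly decreases and the resulting complex remains a simplicial disc with the same boundary labelling. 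After finitely many such collapses no two $A$-vertices are adjacent, in particular no interior vertex of $\Lambda$ maps to $A$, and the main argument above applies verbatim.
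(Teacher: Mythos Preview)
Your argument matches the paper's: both take the link $\Lambda$ in $P$ of the boundary vertex $v_A$ mapping to $A=[\langle a\rangle]$, note that it is an arc from the preimage of $A_0$ to that of $A_N$, push it into $\FreeF^{\uparrow}(A)$, and apply \Cref{coarsely Lip} together with \Cref{example} to bound its length from below by a linear function of $N$. You then count triangles incident to $v_A$; the paper uses the cruder inequality $3T\ge E$, but the conclusion is the same.

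You rightly flag a subtlety the paper passes over in silence: interior vertices of $\Lambda$ might map to $A$ itself, so $f(\Lambda)$ need not lie in $\FreeF^{\uparrow}(A)$. Your minimality-plus-collapse idea is the correct fix, but the clause ``passing to a local subdivision if it fails'' is where the write-up is weakest, since subdivision may \emph{increase} the triangle count and it is then unclear the process terminates. A cleaner version: suppose $P$ is minimal and some interior edge $e=uv$ has $f(u)=f(v)$. If no vertex outside the link of $e$ is adjacent to both $u$ and $v$, collapsing $e$ yields a simplicial disc filling with two fewer triangles, contradicting minimality. If instead some $w$ outside the link of $e$ is adjacent to both $u$ and $v$, then the $3$-cycle $u,v,w$ bounds a sub-disc of $P$ with at least three $2$-simplices (since $uvw$ is not itself a face and a two-triangle disc has a $4$-cycle boundary); because $\{f(u),f(v),f(w)\}=\{f(u),f(w)\}$ spans a simplex of $\FreeF_n$, one may replace that sub-disc by the single $2$-simplex $uvw$, again reducing the triangle count. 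Either way minimality is contradicted, so in a minimal filling no interior vertex of $\Lambda$ maps to $A$, and your main argument goes through verbatim.
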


\begin{proof} Let $\free = \la a, b, a_3, \ldots, a_n \ra$.  
Let $w \in B = \la b, a_3, \ldots, a_n \ra$ be cyclically reduced, filling in $B$ and not a power of another element. 
For any $N>0$, let $c_N$ be the length 4 loop in $\FreeF_n$ with vertices $A_0 = [\la a,b \ra], A_1 = [\la a \ra], A_N = [\la a, w^Nbw^{-N}\ra], A_2 = [\la b \ra]$ (see \Cref{loop}).  Let $P$ be a triangulation of a disc $D^2$ and $c \colon P \to \FreeF_n$ a simplicial map such that $c|_{\partial D^2} = c_N$. 

Since $A_1$ is a rank one free factor, $\FreeF^{\uparrow}(A_1)$ is the full link of $A_1$ in $\FreeF$. By \Cref{coarsely Lip}, the map $\Psi_{w,b,B} \colon \FreeF^{\uparrow}(A_1) \to \Z$ is coarsely Lipschitz. Combined with the calculation in \Cref{example}, the distance between $A_0$ and $A_N$ in the link of $A_1$ is at least $N - 2C$ where $C = 2(n -1)-2 = 2n -4$.  

Let $x_i$ be the pre-image of $A_i$ on the boundary of the disc $D$. Then in $P$, there is an edge path from $x_0$ to $x_N$ of length at least $N - 2C$. If we count every triangle thrice, then we count every edge at least once. Therefore, there are at least $(N -2C)/3$ many triangles in $P$. Thus, for arbitrary $N>0$, the length 4 loop $c_N$ requires at least $(N -2C)/3$ triangles to be capped off. 
\end{proof}

\section{The case \texorpdfstring{$n=3$}{rank 3}} \label{n=3}
The free factor complex for the free group of rank 3 as defined in the introduction is 1-dimensional, that is, a graph. There are many loops in $\FreeF_3$ that are not contractible and hence `capping them off' as in \Cref{subsec:CII} does not make sense. We change the complex, \emph{up to quasi-isometry}, by adding some edges and 2-simplices to obtain a new complex, which we show does not satisfy a combinatorial isoperimetric inequality. 

Let $\FreeF_3'$ be obtained from $\FreeF_3$ by adding edges between two vertices $A = [\la a, x\ra]$ and $B = [\la a, y\ra]$ whenever $a, x, y$ is a basis of $\free_3$. Note, such an $A$ and $B$ are distance 2 apart in $\FreeF_3$ and hence adding these edges does not change the quasi-isometry type of the complex. Basically, we have triangulated the loops of length 6 shown in \Cref{F3}. Now add a 2-simplex whenever we see the 1-skeleton of a 2-simplex. Then $\FreeF_3'$ is quasi-isometric to $\FreeF_3$. Consider the loop $c_N$ in $\FreeF_3'$ as in \Cref{loop}. 

\begin{figure}[ht]
    \centering{
    \def\svgwidth{.5\columnwidth}
    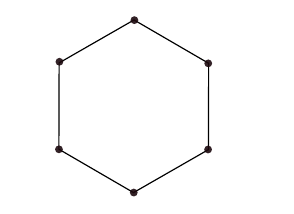
    \caption{Adding edges and 2-simplices to $\FreeF_3$.}
    \label{F3}
    }
\end{figure}

Let $\free_3 = \la a, b, c \ra$, $A = [\la a \ra]$ and $w$ filling, cyclically reduced element in $\la b, c \ra$ that is not a power of another element. The complex $\FreeF^{\uparrow}(A)$ is not connected as a subset of $\FreeF_3$ since all the free factors containing $A$ are rank 2. An argument similar to \Cref{connected} using folding paths, can be used to show that $\FreeF^{\uparrow}(A)$ is in fact a connected subset of $\FreeF_3'$. 

For $X \in \FreeF^{\uparrow}(A)$, there is only one free splitting $T_X$ with vertex group system equal to $X$, up to equivariant isomorphism. Therefore, we set $\Psi_{w,b,B}(X)$ equal to $\Phi_{w,b,B}(T_X)$. Now we show that $\Psi_{w,b,B} \colon \FreeF^{\uparrow}(A) \to \Z$ is a Lipschitz map. Let $X$ and $Y$ be distance one in $\FreeF^{\uparrow}(A)$, that is we may assume $X = [\la a, x \ra], Y = [\la a, y \ra]$ and $\{a, x, y\}$ is a basis of $\free_3$. Let $T_X$ be the Bass-Serre tree of the graph of groups with vertex group $X$ and edge labeled $y$, and vice versa for $T_Y$. Let $T$ be a common refinement of $T_X$ and $T_Y$ described as the Bass-Serre tree of the graph of groups with one vertex stabilized by $[\la a \ra]$ and two loops labeled $x$ and $y$. Then there are collapse maps $p_X\colon T \to T_X$ and  $p_Y\colon T \to T_Y$. Defining $\Phi_{w,b,B}(T)$ as before, we see that $|\Phi_{w,b,B}(T) - \Phi_{w,b,B}(T_X)| \leq 1$ and same for $Y$ since $w$ is hyperbolic in all three trees. Therefore, $|\Phi_{w,b,B}(T_Y) - \Phi_{w,b,B}(T_X)| \leq 2$ which implies that $\Psi_{w,b,B}$ is Lipschitz. Combined with the calculation similar to \Cref{example}, the distance between $A_0 = [\la a,b \ra]$ and $A_N = [\la a, w^N bw^{-N} \ra]$ in the link of $A_1 = [\la a \ra]$ is at least $N/2$. Therefore, the same argument as before shows that at least $N/6$-triangles are needed to cap off $c_N$.

\bibliographystyle{alpha}
\bibliography{ref}

\end{document}